\documentclass[a4paper,twoside]{amsart}
\usepackage{amsmath}
\usepackage{amsthm,color}
\usepackage{amssymb}
\usepackage{amsmath}
\usepackage{amssymb}
\usepackage{ifpdf}
\ifpdf
  \usepackage[colorlinks,linkcolor=red,anchorcolor=blue,citecolor=green]{hyperref}
\else
\fi

\newtheorem{theorem}{Theorem}[section]

\newtheorem{corollary}[theorem]{Corollary}

\newtheorem{lemma}[theorem]{Lemma}

\newtheorem{question}[theorem]{Question}

\numberwithin{equation}{section}

\newcommand{\PA}{\operatorname{PA}}
\newcommand{\SSy}{\operatorname{SSy}}

\newcommand{\ZF}{\operatorname{ZF}}
\newcommand{\ZFC}{\operatorname{ZFC}}

\newcommand{\MA}{\operatorname{MA}}


\begin{document}

\title[On Non-standard Models of Arithmetic]{On Non-standard Models of Arithmetic\\
with Uncountable Standard Systems}

\subjclass[2010]{03C62 03H15 03D28 03E50}
\keywords{Non-standard models of arithmetic, standard systems, Scott sets, Martin's Axiom}

\author{Wei Wang}
\address{Institute of Logic and Cognition and Department of Philosophy\\Sun Yat-Sen University\\Guangzhou, China}
\email{wwang.cn@gmail.com}

\thanks{The author was partially supported by China NSF Grant 11971501.
The results here have been presented in several occasions.
The author thanks various logicians for their helpful opinions,
 in particular, Tin Lok Wong, Jiacheng Yuan, Yinhe Peng and Victoria Gitman.}

\begin{abstract}
In 1960s, Dana Scott gave a recursion theoretic characterization of 
 standard systems of countable non-standard models of arithmetic,
 i.e., collections of sets of standard natural numbers coded in non-standard models.
Later, Knight and Nadel proved that Scott's characterization also applies to 
 non-standard models of arithmetic with cardinality $\aleph_1$.
But the question, 
 whether the limit on cardinality can be removed from the above characterization, 
 remains a long standing question,
 known as the Scott Set Problem.
This article presents two constructions of non-standard models of arithmetic 
 with non-trivial uncountable standard systems.
The first one leads to a new proof of the above theorem of Knight and Nadel,
 and the second proves the existence of models 
 with non-trivial standard systems of cardinality the continuum.
A partial answer to the Scott Set Problem under certain set theoretic hypothesis 
 also follows from the second construction.
\end{abstract}

\maketitle

\section{Introduction}\label{s:introduction}

Given a non-standard model of arithmetic $M$, 
 i.e. a model of arithmetic different from $\mathbb{N}$,
 a subset of $\mathbb{N}$ is \emph{coded} in $M$ iff 
 it equals to the intersection of $\mathbb{N}$ and some definable subset of $M$.
The \emph{standard system} of $M$, denoted by $\SSy(M)$, 
 is the collection of subsets of $\mathbb{N}$ that are coded in $M$,
 and has proved important in the theory of models of arithmetic.
As an example, we recall a theorem of Friedman and also some related concepts.
 
Given a model $M$ and a finite set of parameters $\vec{a} = (a_1,\ldots,a_n)$ from $M$,
 a type $p$ of $M$ over $\vec{a}$ is \emph{recursive},
 iff $p$ is in a fixed finite set of free variables $\vec{x}$ and the following set of formulas is recursive
$$
  \{\varphi(\vec{x},\vec{y}): \varphi(\vec{x},\vec{a}) \in p\}.
$$
A model $M$ is \emph{recursively saturated}
 iff every recursive type of $M$ is realized in $M$.
Every infinite model can be elementarily extended to a recursively saturated model.
For countable recursively saturated models of $\PA$,
 standard systems are their own blueprints kept by themselves,
 according to Friedman's Embedding Theorem below
 (which can also be found in more recent literature like \cite{Kossak.Schmerl:2006.book}).
 
\begin{theorem}[Friedman \cite{Friedman:1971}]\label{thm:Friedman}
Suppose that $M$ and $N$ are countable recursively saturated models of $\PA$,
 and they are elementarily equivalent.
Then $\SSy(M) \subseteq \SSy(N)$ iff
 there exists an elementary embedding sending $M$ to an initial segment of $N$.
Moreover, $M$ and $N$ are isomorphic iff $\SSy(M) = \SSy(N)$.
\end{theorem}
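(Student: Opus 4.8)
The plan is to treat the two biconditionals separately and reduce everything to a back-and-forth construction of finite elementary maps $\vec a\mapsto\vec b$, with $\vec a$ from $M$ and $\vec b$ from $N$. First I would dispose of the easy directions. If $f\colon M\to N$ is an elementary embedding (whether or not its image is initial), then $f\uh\mathbb N=\id$, since $f(0)=0$ and $f(n+1)=f(n)+1$ force $f(n)=n$ for every standard $n$; hence if $A\in\SSy(M)$ is coded by $a$ via $A=\{n:M\models\theta(n,a)\}$, then $f(a)$ codes the same set in $N$, so $A\in\SSy(N)$. Applying this transfer in both directions shows that an isomorphism forces $\SSy(M)=\SSy(N)$. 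Thus only the two nontrivial implications remain.

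The construction rests on two facts about a countable recursively saturated $K\models\PA$, which I would record as preliminary lemmas. The first is that the complete type of any tuple $\vec c$ lies in $\SSy(K)$: the type $\{\,(n\in z)\leftrightarrow\varphi_n(\vec c):n\in\mathbb N\,\}$ in the single variable $z$ is recursive and finitely satisfiable, so recursive saturation produces an element $d$ with $(n\in d)\leftrightarrow\varphi_n(\vec c)$ for all standard $n$, i.e.\ $d$ codes $\mathrm{tp}^K(\vec c)$. The second is a relativized recursive saturation: if $X\in\SSy(K)$ and $p(x,\vec c)$ is finitely satisfiable in $K$ with its formula-set recursive in $X$, then $p$ is realized in $K$. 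To see this one decodes $X$ from a code $e\in K$ and observes that, using $e$ to supply the bits of $X$, deciding membership in $p$ becomes an ordinary recursive condition, so $p$ is equivalent to a recursive type over the parameters $\vec c,e$, which recursive saturation realizes.

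For the forward implication of the first biconditional I would build $f$ as an increasing union of finite elementary maps. At a forth step, given $\vec a\mapsto\vec b$ and a new $a\in M$, the type $q(x,\vec y)=\mathrm{tp}^M(a,\vec a)$ has its code in $\SSy(M)\subseteq\SSy(N)$ by the first lemma, and $q(x,\vec b)$ is finitely satisfiable in $N$ because the map is elementary; the second lemma then yields a witness $b\in N$, and $(\vec a,a)\mapsto(\vec b,b)$ is again elementary. The delicate part—and the step I expect to be the main obstacle—is forcing the image to be downward closed. To catch an element $c\in N$ lying below some $f(a_k)$ one wants a preimage in $M$ below $a_k$ realizing $\mathrm{tp}^N(c,\vec b)$; but realizing this type in $M$ requires its code to be in $\SSy(M)$, which the hypothesis $\SSy(M)\subseteq\SSy(N)$ does not supply. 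The resolution exploits the freedom in placing $f(a_k)$: one realizes, together with the type of $a_k$, a further $\SSy(M)$-coded condition pinning down the cut below $b_k$ so that every $c<b_k$ carries an $\SSy(M)$-coded type and can be pulled back. Showing that such placements can be made coherently and cofinally—so that $f$ exhausts $M$ while its image stays an initial segment—is the technical heart of the forward direction.

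For the second biconditional the obstruction disappears. Under $\SSy(M)=\SSy(N)$ the back step becomes symmetric to the forth step: the type of any $c\in N$ over $\vec b$ has its code in $\SSy(N)=\SSy(M)$, so by the second lemma it is realized in $M$, with no constraint on initiality to maintain. Interleaving forth and back steps along enumerations of $M$ and $N$ then produces a bijective elementary map, i.e.\ an isomorphism. Combined with the easy direction noted above, this gives $M\cong N$ iff $\SSy(M)=\SSy(N)$.
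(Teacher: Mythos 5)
The paper itself does not prove Theorem~\ref{thm:Friedman}; it is quoted from Friedman with a citation, so your proposal can only be judged on its own terms. The easy directions, the two preliminary lemmas (the complete type of any tuple in a countable recursively saturated model is coded in its standard system; a finitely satisfiable type recursive in a member of the standard system is realized), the forth step, and the entire ``moreover'' clause are correct and standard. The genuine gap is exactly where you locate it: the back step needed to make the image of $f$ downward closed. You describe a strategy (``pinning down the cut below $b_k$ so that every $c<b_k$ carries an $\SSy(M)$-coded type'') but do not carry it out; the claim that such placements ``can be made coherently and cofinally'' is asserted, not proved. As written, the forward direction of the first biconditional has no proof.

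Moreover, the obstacle you hit there is not merely technical: read as saying that the image $f(M)$ is an initial segment of $N$, that direction cannot be proved, because it fails whenever $\SSy(M)\subsetneq\SSy(N)$. Indeed, if $f$ maps $M$ elementarily onto a nonstandard initial segment $I$ of $N$, then every $X\in\SSy(N)$ is already coded inside $I$: fix a nonstandard $b\in I$, let $b'=\lfloor\log_2 b\rfloor$ (still nonstandard), and replace a code $e$ of $X$ by $e\bmod 2^{b'}$, which is $<b$, hence in $I$ by downward closure, and has the same standard bits as $e$. Pulling back along $f$ puts $X$ in $\SSy(M)$, so $\SSy(N)\subseteq\SSy(M)$ and the standard systems must be equal. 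This is precisely why your back step cannot be salvaged: below any nonstandard $b_k$ there sit codes for every member of $\SSy(N)$, including sets outside $\SSy(M)$, and those elements can never acquire preimages. The version of Friedman's theorem that is true under the hypothesis $\SSy(M)\subseteq\SSy(N)$ (and the one actually needed, e.g., to derive Theorem~\ref{thm:Ehrenfeucht}) asserts an elementary embedding of $M$ \emph{into} $N$, with no initiality requirement; your forth-only argument already proves that in full. The initial-segment conclusion requires $\SSy(M)=\SSy(N)$, in which case it follows from the isomorphism clause you did prove correctly. I suggest you prove the ``into'' version and record the observation above as the reason the initial-segment strengthening is unavailable.
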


In 1962, Scott \cite{Scott:1962} proved that the standard system $\mathcal{S}$ of a non-standard model (of arithmetic)
 always satisfies some recursion theoretic conditions below.
\begin{enumerate}
 \item[(S1)] If $X$ and $Y$ are both in $\mathcal{S}$ then so is $X \oplus Y = 2X \cup (2Y + 1)$.
 \item[(S2)] If $X \in \mathcal{S}$ and $Y$ is recursive in $X$ then $Y \in \mathcal{S}$.
 \item[(S3)] If $\mathcal{S}$ contains an infinite binary tree $T$ then $\mathcal{S}$ also contains an infinite path of $T$.
\end{enumerate}
Today, a collection of subsets of $\mathbb{N}$ satisfying (S1-3) above is called a \emph{Scott set}.
Scott also proved the reverse direction for countable Scott sets.
 
\begin{theorem}[Scott \cite{Scott:1962}]\label{thm:Scott}
A countable $S$ is a Scott set iff
 $S = \SSy(M)$ for some countable non-standard model $M$ of $\PA$.
\end{theorem}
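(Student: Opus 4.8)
The forward direction is immediate from what precedes the statement: for any countable non-standard $M \models \PA$, the collection $\SSy(M)$ is countable (there are only countably many pairs consisting of a formula and a parameter) and satisfies (S1)--(S3) by Scott's 1962 result. So the content is the reverse direction, and the plan is to take a countable Scott set $S$ and build a countable non-standard $M \models \PA$ with $\SSy(M) = S$. We may assume $S$ is non-empty, whence by (S2) it contains every recursive set. I would run a Henkin construction in the language $L = L_\PA \cup \{c_i : i < \omega\}$ with countably many new constants, building in stages an increasing chain of consistent theories $\PA \subseteq U_0 \subseteq U_1 \subseteq \cdots$ whose union $T$ is a complete Henkin theory; then $M$ is the term model of $T$. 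Writing $\theta^{1}$ for $\theta$ and $\theta^{0}$ for $\neg\theta$, the governing invariant, to be preserved at every stage, is that $U_s$, coded as a subset of $\omega$ via a fixed G\"odel numbering, is a consistent extension of $\PA$ lying in $S$.

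The engine is a path lemma: if $U \in S$ is consistent, then for a fixed sentence enumeration the tree of binary strings $\beta$ carrying no proof of contradiction (of code below $|\beta|$) from $U \cup \{\theta_i^{\beta(i)} : i < |\beta|\}$ is, by bounded proof search, recursive in $U$, hence in $S$ by (S2); it is infinite since $U$ is consistent, so by (S3) it has a path in $S$, which codes a completion of $U$ in $S$. The negative requirements use the same device, and meeting them is the crux. For each pair $(\varphi(x,y),t)$ of a formula and a closed term, at its stage I form, from the current $U_s \in S$, the recursive-in-$U_s$ tree of finite bit-patterns $\beta$ for which $U_s \cup \{\varphi(\bar n,t)^{\beta(n)} : n < |\beta|\}$ carries no short contradiction; it is in $S$ by (S2), and infinite because any completion of $U_s$ yields a path, so (S3) gives a path $g \in S$. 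Setting $U_{s+1} = U_s \cup \{\varphi(\bar n,t)^{g(n)} : n < \omega\}$ keeps the theory consistent, and since $U_{s+1} \le_T U_s \oplus g$ it lies in $S$ by (S1) and (S2). This permanently fixes the set coded by $(\varphi,t)$ to be $\{n : g(n)=1\} \in S$. After absorbing tuples of parameters into a single one by $\PA$-pairing, every set coded in $M$ arises from such a pair, so handling all pairs yields $\SSy(M) \subseteq S$.

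The positive requirements, completeness, and Henkin witnessing fold into the same stage construction. For each infinite $Z \in S$ I introduce a fresh constant $d_Z$ and commit, exactly as above with $g = Z$, to $\mathrm{bit}_n(d_Z) = Z(n)$ for all $n$; this is consistent because $d_Z$ is fresh, it forces $d_Z$ to be non-standard (so $M$ is non-standard), and it places $Z$ in $\SSy(M)$. Finite sets are coded automatically by numerals, so $S \subseteq \SSy(M)$. Deciding each sentence and adding a witness for each existential theorem are single-sentence extensions that keep $U_s$ consistent and in $S$. A routine bookkeeping enumerates all sentences together with all pairs $(\varphi,t)$, folding in the pairs that mention newly introduced constants as they appear; the term model of the resulting complete Henkin theory is then the desired $M$.

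The main obstacle, and the reason (S1)--(S3) are exactly what is needed, lies on the negative side. One cannot simply arrange $T \in S$ and quote (S2), since coding every member of $S$ forces $T$ to compute the join of all of $S$, which is typically not in $S$. The resolution is the stage invariant: at each finite stage only finitely many blocks $g$ have been committed, so $U_s$ is a finite join of members of $S$ and remains in $S$ by (S1) and (S2), and (S3) is then invoked block by block to trap each individual coded set inside $S$, even though the limit theory $T$ escapes $S$. Checking that the tree at each stage is genuinely recursive in $U_s$ and infinite is where the care lies, but these verifications are routine once the invariant is maintained.
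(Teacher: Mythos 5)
The paper offers no proof of this statement --- Theorem \ref{thm:Scott} is quoted from Scott's 1962 paper, with only the forward direction (that $\SSy(M)$ satisfies (S1--3) and is countable) discussed in the surrounding text --- so there is nothing to match your argument against line by line. Your proof of the substantive reverse direction is correct and is essentially the classical one: a Henkin construction in which every finite-stage theory $U_s$ is a finite join of members of $S$ and hence stays in $S$ by (S1) and (S2), while (S3) is invoked once per coding pair $(\varphi,t)$ to trap the set it codes inside $S$, and fresh constants with prescribed infinite bit patterns both import each $Z \in S$ into the standard system and force non-standardness. You also correctly isolate the crux --- the limit theory escapes $S$, so each coded set must be secured at a finite stage --- and your tree condition (``no contradiction of code below $|\beta|$'') is the right one: it makes each tree recursive in $U_s$ and infinite, and it forces every path to yield a consistent extension even when earlier stages have already decided some of the relevant bits. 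The route the paper's own machinery suggests is different in architecture: \S\ref{s:Ehrenfeucht} proves Ehrenfeucht's Theorem \ref{thm:Ehrenfeucht} by building one bounded type over a fixed countable model, and Theorem \ref{thm:Scott} then follows by starting from a countable non-standard model with standard system contained in $S$ (obtained from a completion of $\PA$ lying in $S$) and iterating Theorem \ref{thm:Ehrenfeucht} along an elementary $\omega$-chain that enumerates $S$. Your version buys a self-contained, single-model construction that needs no type-extension machinery; the chain-of-types version concentrates all the (S3) work in the single tree $T'$ of the Ehrenfeucht argument and reuses the paper's saturation-free extension step.
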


Knight and Nadel \cite{Knight.Nadel:1982.jsl.models} extended Scott's Theorem to some uncountable Scott sets.

\begin{theorem}[Knight and Nadel \cite{Knight.Nadel:1982.jsl.models}]\label{thm:KN}
Every Scott set of cardinality $\leq \aleph_1$ is the standard system of a non-standard model of $\PA$.
\end{theorem}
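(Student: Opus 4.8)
The plan is to reduce the uncountable case to Scott's Theorem~\ref{thm:Scott} by an elementary chain argument of length $\omega_1$, using Friedman's Theorem~\ref{thm:Friedman} to pin down the standard system at every step. Fix a Scott set $\mathcal{S}$ with $|\mathcal{S}| = \aleph_1$ and a completion $\mathcal{T}$ of $\PA$ whose set of Gödel numbers lies in $\mathcal{S}$. First I would build a \emph{continuous filtration} $\mathcal{S} = \bigcup_{\alpha < \omega_1} \mathcal{S}_\alpha$ as an increasing, continuous chain of countable Scott sets with $\mathcal{T} \in \mathcal{S}_0$. This is possible because the Scott-set closure of a countable subset of $\mathcal{S}$, computed \emph{inside} $\mathcal{S}$, is again countable: closing under $\oplus$ and relative recursion, i.e.\ (S1)--(S2), adjoins only countably many sets, and for each infinite tree that appears one adjoins a single path lying in $\mathcal{S}$, which exists by (S3) applied in $\mathcal{S}$; iterating $\omega$ times yields a countable Scott set contained in $\mathcal{S}$. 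Since a union of an increasing chain of Scott sets is again a Scott set, one may take $\mathcal{S}_\lambda = \bigcup_{\alpha<\lambda}\mathcal{S}_\alpha$ at limits, so continuity is automatic, and enumerating $\mathcal{S}$ in order type $\omega_1$ guarantees the filtration exhausts $\mathcal{S}$.

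Next I would construct an increasing elementary chain of countable recursively saturated models, all modelling $\mathcal{T}$, with $\SSy(M_\alpha) = \mathcal{S}_\alpha$. The successor step rests on two ingredients. The first is a standard strengthening of Theorem~\ref{thm:Scott}: every countable Scott set is the standard system of a countable \emph{recursively saturated} model of $\PA$ realizing any prescribed completion belonging to it. This is proved by realizing recursive types along a countable chain, choosing each realizing element so that it codes only sets recursive in the (coded) parameters together with the recursive type itself, whence nothing leaves the Scott set. The second ingredient is Friedman's Theorem~\ref{thm:Friedman}. Given $M_\alpha$, I obtain a countable recursively saturated $N \models \mathcal{T}$ with $\SSy(N) = \mathcal{S}_{\alpha+1}$; since $\SSy(M_\alpha) = \mathcal{S}_\alpha \subseteq \mathcal{S}_{\alpha+1} = \SSy(N)$ and both models are countable, recursively saturated and elementarily equivalent, Theorem~\ref{thm:Friedman} embeds $M_\alpha$ elementarily onto an initial segment of $N$, and I set $M_{\alpha+1} := N$ along this embedding. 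At a limit $\lambda$ I take the elementary union $\bigcup_{\alpha<\lambda} M_\alpha$, whose standard system is $\mathcal{S}_\lambda$ by the computation below, and then pass to a countable recursively saturated elementary extension with the \emph{same} standard system $\mathcal{S}_\lambda$, so that the hypotheses of Friedman's Theorem are available again at the next successor step.

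Finally, let $M$ be the union of this elementary chain. The decisive observation is that the standard system of an elementary union is the union of the standard systems. Any set coded in $M$ is coded by a single element $c$, which already lies in some member $M_\alpha$ of the chain; by elementarity of $M_\alpha \prec M$ the element $c$ codes the very same set in $M_\alpha$, so that set lies in $\SSy(M_\alpha) \subseteq \mathcal{S}$. Conversely every $X \in \mathcal{S}$ appears in some $\mathcal{S}_\alpha = \SSy(M_\alpha)$ and is therefore coded in $M$. Hence $\SSy(M) = \mathcal{S}$, and $M$ is non-standard since $|M| = \aleph_1$, as required.

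I expect the main obstacle to be the \emph{exact} control of the standard system throughout the recursion, i.e.\ guaranteeing that no set outside $\mathcal{S}$ ever becomes coded. The successor step dispatches this cleanly: an initial-segment embedding forces $\SSy(M_{\alpha+1}) = \SSy(N) = \mathcal{S}_{\alpha+1}$ on the nose. The genuinely delicate part is re-establishing recursive saturation at limit stages without enlarging the standard system, since the union of a chain of recursively saturated models need not be recursively saturated; this is where one must lean on the fact that realizing recursive types over a model with standard system $\mathcal{S}_\lambda$ adds only sets recursive in data already present in $\mathcal{S}_\lambda$, a Scott set and hence closed under relative recursion.
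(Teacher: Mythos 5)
Your argument is correct in outline, but it is a genuinely different route from the one taken in this paper --- in fact it is essentially the classical route that the paper is explicitly written to avoid. You filter $\mathcal{S}$ into a continuous chain of countable Scott sets, realize each $\mathcal{S}_\alpha$ as the standard system of a countable \emph{recursively saturated} model of a fixed completion $\mathcal{T}\in\mathcal{S}$ (a known strengthening of Theorem~\ref{thm:Scott}), and glue the models together with Friedman's Theorem~\ref{thm:Friedman}, re-saturating at limits. The paper instead proves Ehrenfeucht's Theorem~\ref{thm:Ehrenfeucht} by a direct $\omega$-stage type construction over the countable model: at stage $k$ one uses (S3) on an auxiliary tree $T'$ to decide, inside $\mathcal{S}$, which set the $k$-th Skolem function of the generic element will code; Theorem~\ref{thm:KN} then follows by iterating this single-step extension $\omega_1$ times. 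What your approach buys is brevity, provided one accepts Friedman's embedding theorem and the recursively saturated refinement of Scott's theorem as black boxes; what the paper's approach buys is self-containment --- it needs nothing beyond the Scott-set axioms and finite satisfiability, and it isolates the reusable one-step statement (Theorem~\ref{thm:Ehrenfeucht}) that also drives \S\ref{s:uncountable-SSy}.

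One point in your sketch deserves more care. You justify both the recursively saturated version of Scott's theorem and the limit-stage re-saturation by saying that a realizing element can be chosen ``so that it codes only sets recursive in the (coded) parameters together with the recursive type itself.'' That is not accurate as stated: the elements $F(b)$ of the Skolem hull of the new element $b$ can code sets far above the parameters, and keeping them inside the Scott set requires exactly the tree-plus-(S3) argument (choose, for each definable $F$, a path through the tree of possible values of the bits of $F(b)$ that already lies in $\mathcal{S}$, and add the corresponding constraints to the type). Since the results you invoke are true and standard, this is an imprecision in the justification rather than a gap in the proof, but it is the same mechanism the paper makes explicit in its construction of the sets $X_{k+1}$ via the trees $T'$.
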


But the question, whether Scott's Theorem holds for arbitrary Scott sets, remains open,
and has been named the \emph{Scott Set Problem} in literature (e.g., see \cite{Kossak.Schmerl:2006.book}).

\begin{question}[Scott Set Problem]\label{q:Scott-set}
Does every Scott set equal to $\SSy(M)$ of some non-standard model of $\PA$?
\end{question}

Nevertheless, there are some interesting partial answers to the Scott Set problem.
For example, Gitman \cite{Gitman:2008.Scott} proved that
 certain uncountable Scott sets could equal to standard systems of non-standard models,
 under the Proper Forcing Axiom.
People also investigate parallel questions in other first order theories,
 e.g., real closed fields and Presburger arithmetic in \cite{Dolich.Knight.ea:2015.aml}.

This article presents some attempts to understand the Scott Set Problem.

In \S \ref{s:Ehrenfeucht}, we shall see an alternative proof of Theorem \ref{thm:KN} of Knight and Nadel.
Indeed, there have been several alternative proofs of Theorem \ref{thm:KN}.
It may be interesting to note that 
 all known proofs of Theorem \ref{thm:KN} rely on recursively saturated models,
 including the original proof and a recent one published in \cite{Dolich.Knight.ea:2015.aml}.
These may be read as evidences reinforcing the tie between standard systems and recursively saturated models.
However, the proof presented here does not need recursive saturation
 and looks more straightforward.
 
In \S \ref{s:uncountable-SSy}, we shall prove that there do exist models of $\PA$
 which have non-trivial standard systems of cardinality the continuum.
From the proof of this existence, we shall be able to derive some partial answer
 to the Scott Set Problem.
 
We finish this section by recalling some notations and basic knowledge
 which will be used in the rest of the article.
 
Above we use $\mathbb{N}$ to denote the standard model of arithmetic.
We shall also use $\mathbb{Q}$ to denote the set of standard rational numbers.
But in many cases, it is more convenient to use $\omega$ for $\mathbb{N}$,
 as in set theory.
A subset of $\omega$ is identified with its characteristic function.
Given a non-standard $M \models \PA$, 
 every $a \in M$ can be regarded as a binary sequence,
 with the $i$-th bit denoted by $(a)_i$.
If $a \in M - \omega$, $a$ \emph{codes} the following subset of $\omega$
$$
  \{i \in \omega: M \models (a)_i = 1\}.
$$
The \emph{standard system} of $M$, denoted by $\SSy(M)$,
 is the collection of subsets of $\omega$ coded by some $a \in M - \omega$.
It is easy to see that the elements of $\SSy(M)$ coincide with
 subsets of $\omega$ which are intersections of $\omega$ and definable subsets of $M$.

Since $\PA$ admits definable Skolem functions,
 we can build an elementary extension of a given $M \models \PA$,
 by building a type $p(x)$ of $M$,
 and then take an extension (called a \emph{$p(x)$-extension} of $M$) with its universe consisting of $F(b)$,
 where $b$ is a fixed realization of $p(x)$ and $F$ ranges over all unary functions definable in $M$.
If $p(x)$ is bounded, i.e., $p(x) \vdash x < a$ for some $a \in M$,
 we may even assume that $F$ is defined on $\{i \in M: i < a\}$ and so is (coded by) an element of $M$.

A collection $\mathcal{S}$ of subsets of $\omega$ satisfying (S1,S2) above
 is called a \emph{Turing ideal}.
So Scott sets are Turing ideals satisfying (S3).
Suppose that $\mathcal{I}$ is a Turing ideal.
A set is \emph{$\mathcal{I}$-recursive} iff it is recursive in some set in $\mathcal{I}$.
Given another $X \subseteq \omega$,
 let $\mathcal{I} \oplus X$ denote the following collection
$$
  \{Z \subseteq \omega: Z \text{ is recursive in } \mathcal{I} \oplus X\},
$$
which is clearly also a Turing ideal.

For a better background in models of arithmetic, we recommend \cite{Kossak.Schmerl:2006.book}.

\section{A Straightforward Construction}\label{s:Ehrenfeucht}

This sections presents an alternative proof of Theorem \ref{thm:KN} of Knight and Nadel,
 via the following result of Ehrenfeucht.
It is clear that Theorem \ref{thm:Ehrenfeucht} implies Theorem \ref{thm:KN}.
Theorem \ref{thm:Ehrenfeucht} itself is known provable via Friedman's Embedding Theorem \ref{thm:Friedman} (see \cite{Gitman:2008.Scott}).
So the known proof relies on recursive saturation.
Below we present a proof of Theorem \ref{thm:Ehrenfeucht} via a straightforward construction,
 which does not need recursive saturation.

\begin{theorem}[Ehrenfeucht]\label{thm:Ehrenfeucht}
Let $\mathcal{S}$ be a Scott set and $M$ a countable non-standard model of $\PA$ with $\SSy(M) \subseteq \mathcal{S}$. 
For every $X \in \mathcal{S}$ there exists a countable elementary extension $N$ of $M$ with $X \in \SSy(N) \subseteq \mathcal{S}$.
\end{theorem}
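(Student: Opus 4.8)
The plan is to build $N$ as a $p(x)$-extension of $M$ for a carefully chosen type $p(x)$, exploiting the fact that $\PA$ has definable Skolem functions (as recalled in the excerpt). The goal is to arrange that some element $b$ realizing $p$ codes the target set $X$, while simultaneously guaranteeing that every set coded in the resulting extension stays inside $\mathcal{S}$. Since $M$ is countable, I would enumerate in advance all the objects that must be controlled: all definable unary functions $F$ of $M$ (there are only countably many, up to the countably many parameters from $M$), since every element of $N$ has the form $F(b)$ and hence every set in $\SSy(N)$ is coded by some $F(b)$. The construction of $p(x)$ will then be a countable step-by-step affair in which I meet one requirement at a time.

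First I would set up the requirements. To get $X \in \SSy(N)$, I want $b$ to directly code $X$: so I would make $p(x)$ assert, for each standard $i$, that the $i$-th bit of $x$ equals the $i$-th bit of $X$ (this is a consistent, finitely-satisfiable demand since $M$ is non-standard and can code any finite initial segment of $X$). To keep $\SSy(N) \subseteq \mathcal{S}$, for each definable function $F$ I must ensure that the set coded by $F(b)$ — namely $\{i \in \omega : M \models (F(b))_i = 1\}$ — lies in $\mathcal{S}$. The key device is that $\mathcal{S}$ is a Scott set, so it is closed under relative recursiveness and contains paths through infinite trees it holds. For a fixed $F$, as $x$ ranges over the (finitely-branching, $M$-coded) possibilities consistent with the current finite part of $p$, the map $x \mapsto$ (standard part of $F(x)$ bitwise) describes a tree $T_F$ of possible initial segments; since $M$ and $X$ and everything used so far is arithmetically definable from parameters whose standard-system content lies in $\mathcal{S}$, this tree $T_F$ will itself be a member of $\mathcal{S}$.

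The central step, which I expect to be the main obstacle, is to commit the type $p(x)$ along a single infinite path of $T_F$ for each $F$ without destroying consistency or conflicting with the bit-requirements for $X$ and with the demands already imposed by earlier functions $F'$. The natural tool is condition (S3): $\mathcal{S}$ contains an infinite path through any infinite tree it contains, so I can choose, within $\mathcal{S}$, a path specifying the standard bits of $F(b)$, and add to $p(x)$ the formulas forcing $F(x)$ to agree with that path on each standard coordinate. The delicate point is the bookkeeping: I am building $p$ by approximations, each approximation a consistent bounded type, and at stage $n$ I handle the $n$-th function $F_n$; I must verify that committing to an $\mathcal{S}$-path for $F_n$ keeps the type finitely satisfiable in $M$, which amounts to checking that the tree $T_{F_n}$ — computed relative to the finitely many path-choices already made for $F_0,\dots,F_{n-1}$ and to $X$ — is still infinite and still a member of $\mathcal{S}$. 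Closure under $\oplus$ and relative recursiveness (S1,S2) is exactly what lets me fold all prior commitments plus $X$ into a single oracle in $\mathcal{S}$, so that $T_{F_n}$ is $\mathcal{S}$-recursive and (S3) applies.

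Finally, once $p(x)$ has been built meeting every requirement, I would let $N$ be the $p(x)$-extension of $M$ and $b$ the realization of $p$. Every element of $N$ is $F(b)$ for some definable $F$, so every set in $\SSy(N)$ is the standard content of some $F(b)$, which by construction follows the $\mathcal{S}$-path chosen at the corresponding stage and is therefore in $\mathcal{S}$; this gives $\SSy(N) \subseteq \mathcal{S}$. The bit-requirements give $X \in \SSy(N)$ via $b$ itself. Countability of $N$ is immediate since there are only countably many definable $F$. The verification that the extension is elementary is automatic from the definable-Skolem-function construction, and that $N$ is non-standard follows since $M \preceq N$. The only genuine mathematical work lies in the consistency bookkeeping of the middle step, and I would isolate that as a lemma asserting that at each stage the relevant tree lies in $\mathcal{S}$ and is infinite.
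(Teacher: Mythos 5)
Your proposal is correct and follows essentially the same route as the paper: enumerate the (countably many) definable/coded functions, start by making $b$ code $X$, and at each stage use (S1)--(S3) to pick an $\mathcal{S}$-path through the tree of possible standard bit-patterns of $F_n(b)$ consistent with the commitments so far, the tree being infinite by finite satisfiability of the current type and lying in $\mathcal{S}$ because it is recursive in the join of the previously chosen paths with a tree coded in $\SSy(M)$. The paper carries out exactly this bookkeeping by working with a bounded type $p \vdash x < 2^a$ so that the relevant functions are elements of $M$ and the tree membership in $\SSy(M)$ is immediate.
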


Let $\mathcal{S}, M$ and $X$ be as in the statement of the above theorem.
Fix $a \in M - \omega$.
We shall construct a type $p(x)$ of $M$ s.t. $p \vdash x < 2^a$ and
 then let $N$ be a $p(x)$-extension of $M$.
As $M$ is countable, $N$ will be countable as well.
The type $p(x)$ will be constructed as a union of types $(p_i(x): i \in \omega)$.

As $M$ is countable, we can fix a list $(f_i: i \in \omega)$ of all $f \in M$ 
 which maps $2^a = \{n \in M: n < 2^a\}$ to $M$.
Assume that $f_0$ is the identify function on $2^a$.

Let
\begin{align*}
  p_0(x) &= \{x < 2^a\} \cup \{(x)_n = X(n): n \in \omega\} \\
    &= \{x < 2^a\} \cup \{(f_0(x))_n = X(n): n \in \omega\}. 
\end{align*}
As $a > \omega$, $p_0(x)$ is finitely realizable in $M$.
Also note that $p_0(x)$ is recursive in $X$,
and if $N$ is a $p_0(x)$-extension of $M$ then $X \in \SSy(N)$.

Suppose that for $k \in \omega$ we have the following data
\begin{itemize}
 \item $X_0,\ldots,X_k \subseteq \omega$ s.t. 
  $X_0 = X$ and each $X_i$ is in $\mathcal{S}$;
 \item A type of $M$ as below
 $$
  p_k(x) = \{x < 2^a\} \cup \{(f_i(x))_n = X_i(n): i \leq k, n \in \omega\}.
 $$
\end{itemize}
Note that $p_k(x)$ is recursive in $\bigoplus_{i \leq k} X$
 and thus recursive in $\mathcal{S}$,
 and that if $N$ is a $p_k(x)$-extension of $M$ and $b \in N$ realizes $p_k(x)$
  then $f_i(b)$ codes $X_i$ for all $i \leq k$.

Let $T$ be the set of tuples $\vec{\sigma} = (\sigma_i: i \leq k+1)$ s.t.
 $\sigma_i$'s are finite binary sequences of equal length and
 in $M$ the following set is not empty
$$
  W(\vec{\sigma}) = \{c < 2^a: \forall i \leq k+1, n < |\sigma_i| ((f_i(c))_n = \sigma_i(n))\}.
$$
So $T$ is in $\SSy(M)$.

Fix $m \in \omega$.
For each $i \leq k$,
 let $\sigma_i$ be the initial segment of $X_i$ of length $m$.
As $p_k(x)$ is finitely realizable in $M$,
 there exists $c \in M$ s.t. $c < 2^a$ and $(f_i(c))_n = \sigma_i(n)$ for each $i \leq k$ and $n < m$.
Define a binary sequence $\sigma_{k+1}$ of length $m$ by letting $\sigma_{k+1}(n) = (f_i(c))_n$ for $n < m$.
Then for this tuple $\vec{\sigma} = (\sigma_i: i \leq k+1)$, 
 the set $W(\vec{\sigma})$ contains $c$ and thus is not empty.
So $\vec{\sigma} \in T$.
This shows that $T$ is infinite.

Let $T'$ be the set of $\tau \in 2^{<\omega}$
 s.t. if $\tau_i$ is the initial segment of $X_i$ of length $|\tau|$
  then $(\tau_0,\ldots,\tau_{k},\tau) \in T$.
By the above paragraph, 
 $T'$ is an infinite binary tree recursive in $\bigoplus_{i \leq k} X_i \oplus T$
 and thus in $\mathcal{S}$.
So by (S3) in the definition of Scott set,
 $\mathcal{S}$ contains an infinite path of $T'$, denoted by $X_{k+1}$.

Hence the following set is a type of $M$,
$$
  p_{k+1}(x) = p_k(x) \cup \{(f_{k+1}(x))_n = X_{k+1}(n): \forall n \in \omega\},
$$
and $p_{k+1}(x)$ is recursive in $\mathcal{S}$.

Finally, let $p(x) = \bigcup_k p_k(x)$.
Then $p(x)$ is a type of $M$,
 and if $b$ realizes $p(x)$
  then $b$ codes $X$ and each $f_i(b)$ codes $X_i$ which is in $\mathcal{S}$.
So any $p(x)$-extension of $M$ is a desired model $N$.

This ends the proof of Ehrenfeucht's Theorem \ref{thm:Ehrenfeucht}.

\section{Uncountable Standard Systems}\label{s:uncountable-SSy}

Here we shall prove the existence of non-standard models
 whose standard systems are non-trivial and have cardinality the continuum.

\begin{theorem}[$\ZF$]\label{thm:nontrivial-continuum-SSy}
For every non-standard countable $N \models \PA$,
 there are $(M_\mathcal{X}: \mathcal{X} \subseteq 2^\omega)$ s.t.
  each $M_\mathcal{X}$ is an elementary extension of $N$, $|M_\mathcal{X}| = |\SSy(M_\mathcal{X})| = \max\{\omega,|\mathcal{X}|\}$ and
$$
  \mathcal{X} \subseteq \mathcal{Y} \Leftrightarrow M_\mathcal{X} \preceq M_\mathcal{Y} \Leftrightarrow \SSy(M_\mathcal{X}) \subseteq \SSy(M_\mathcal{Y}).
$$
   
Moreover, if $\mathcal{A} \subset 2^\omega - \SSy(N)$ is countable
 then we can have $\mathcal{A} \cap \SSy(M_\mathcal{X}) = \emptyset$ for all $\mathcal{X} \subseteq 2^\omega$.
\end{theorem}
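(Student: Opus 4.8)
The plan is to build everything inside one master elementary extension. Fix $a \in N - \omega$ and construct a single elementary extension $M^* \succeq N$ together with a family of elements $(b_X : X \in 2^\omega)$, each satisfying $b_X < 2^a$, so that $b_X$ codes a subset $\hat X \subseteq \omega$; then set
$$
  M_\mathcal{X} = \mathrm{Scl}^{M^*}\bigl(N \cup \{b_X : X \in \mathcal{X}\}\bigr),
$$
the Skolem closure. Since $\PA$ has definable Skolem functions, each $M_\mathcal{X}$ is an elementary submodel of $M^*$ containing $N$, hence an elementary extension of $N$, and $\mathcal{X} \subseteq \mathcal{Y}$ immediately gives $M_\mathcal{X} \preceq M_\mathcal{Y}$ by Tarski--Vaught. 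As a code for a standard set is preserved in any elementary extension, $M_\mathcal{X} \preceq M_\mathcal{Y}$ yields $\SSy(M_\mathcal{X}) \subseteq \SSy(M_\mathcal{Y})$. The cardinality requirements follow from a count of Skolem terms over the countable $N$ applied to finite tuples of the $b_X$, together with the injectivity of $X \mapsto \hat X$. With these observations the three displayed conditions form a cycle of implications once we supply the remaining one, so the whole theorem reduces to a single \emph{independence statement}:
$$
  X \in \mathcal{X} \setminus \mathcal{Y} \ \Longrightarrow\ \hat X \in \SSy(M_\mathcal{X}) \setminus \SSy(M_\mathcal{Y}),
$$
i.e. $\hat X$ is coded by no Skolem term in finitely many $b_Y$ with $Y \neq X$.

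For the generators I would not insist that $\hat X$ compute $X$; it is enough that $X \mapsto \hat X$ be injective, that each $\hat X \notin \SSy(N)$, and, crucially, that the family be \emph{mutually generic} over $\SSy(N)$, meaning every finite subfamily is mutually Cohen-generic over a fixed countable model $V_0 \supseteq \SSy(N) \cup \mathcal{A}$. The force of the hypothesis $\ZF$ is that $2^\omega$ need not be well-orderable, so a transfinite recursion making continuum many independent choices is unavailable; instead I would produce the whole family at once as a \emph{perfect set of mutually generic reals}. Such a set is given by a perfect tree of Cohen conditions built in $\omega$ stages by a fusion that drives every node into the $n$-th dense set of $V_0$ by level $n$ and keeps the immediate successors of each splitting node incompatible. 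The branches form a canonical copy of $2^\omega$; indexing by it and putting $\hat X := g_X$ makes every $\hat X$ generic over $V_0$ and distinct $\hat X$ mutually generic, \emph{uniformly in} $X$. This uniformity is exactly what defeats the ``pathological index'' problem that would sink a cruder coding (reading one master Cohen real along the branch $X$, say): there a cleverly chosen $X$ could force $\hat X$ to be trivial, whereas here the genericity of every branch is guaranteed by the tree itself.

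To realise $M^*$ I would run a Scott/Ehrenfeucht-style construction as in \S\ref{s:Ehrenfeucht}, but organised over the directed family of \emph{finite} index sets $F \subseteq 2^\omega$. For each such $F$ I want the sets coded by Skolem terms in $\{b_X : X \in F\}$ to lie in a prescribed countable Scott set $\mathcal{S}_F \supseteq \SSy(N) \cup \{g_X : X \in F\}$ which avoids $\mathcal{A}$ and contains $g_X$ for no $X \notin F$; the model is then built so that $\SSy(M_\mathcal{X}) = \bigcup\{\mathcal{S}_F : F \subseteq \mathcal{X}\ \mathrm{finite}\}$. The only nonconstructive step is condition (S3): whenever an infinite tree $T$ recursive in $\bigoplus_{X \in F} g_X \oplus \SSy(N)$ has no path already present, a new path must be added, and I would choose it to avoid $\mathcal{A}$ and every $g_X$ with $X \notin F$.

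The hard part is precisely this control of the (S3)-closure, and it is where mutual genericity does the work. The key lemma to prove is that when a genuinely \emph{new} path must be added, no $g_X$ with $X \notin F$ is a branch of $T$ at all: for $g_X$ is Cohen-generic over the oracle $\bigoplus_{X \in F} g_X \oplus \SSy(N)$ that computes $T$, so $g_X \in [T]$ would force $[T]$ to be non-meager on some basic clopen set, and effective Baire category would then already furnish an oracle-computable path through $T$ --- contradicting that the path is new. Granting this, every newly added path can be kept off the countable set $\mathcal{A}$ and off $\{g_X : X \notin F\}$, which delivers the independence statement and hence the theorem. The two points I expect to cost the most effort are (a) establishing this genericity lemma in the exact form required, tying branches of oracle-computable trees to genericity, and (b) making the path choices \emph{canonically} in $F$ so that the stage-by-stage Ehrenfeucht construction glues coherently over the continuum-sized directed system of finite index sets \emph{without} a well-ordering of $2^\omega$ --- which is what keeps the argument inside $\ZF$.
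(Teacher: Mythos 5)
Your high-level idea --- obtain continuum many mutually independent generators from a single countable construction by a fusion over a perfect tree, so that no well-ordering of $2^\omega$ is needed --- is exactly the core insight of the paper's proof. But the execution you sketch leaves unproved precisely the two steps where all the difficulty lives, and the second of them is headed in a direction that does not obviously close. First, the coherence problem: you propose to run an Ehrenfeucht-style construction ``organised over the directed family of finite index sets $F \subseteq 2^\omega$,'' with a prescribed countable Scott set $\mathcal{S}_F$ for each $F$. There are continuum many finite $F$, the constructions for $F$ and $F'$ must agree on $F \cap F'$, and you offer no mechanism for making these continuum many $\omega$-length recursions cohere canonically; you flag this as point (b) but it is not a finishing touch, it is the theorem. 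The paper's device is to index the variables by \emph{nodes} $\sigma \in 2^{<\omega}$ rather than by branches: a condition is a finite type in the variables $x_\sigma$, $\sigma \in 2^{n_p}$, carrying a positive-density requirement $|p(N)| > r(2^a)^{2^{n_p}}$, and each diagonalization requirement is imposed simultaneously on \emph{all} tuples of nodes at a given level (Lemmata \ref{lem:nontrivial-SSy-incomparability} and \ref{lem:nontrivial-SSy-avoidance}). Since distinct branches eventually pass through distinct nodes, this handles all branch-tuples at once with only countably many dense sets (one per definable $F$ and per $g \in \mathcal{A}$), so an ordinary $\omega$-recursion suffices and the coherence problem never arises.

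Second, the Scott-set detour is both unnecessary and genuinely gappy. The theorem does not require $\SSy(M_\mathcal{X})$ to equal or sit inside any prescribed Scott set; it only requires that for each finite $F$, each Skolem term $t$, each $X \notin F$ and each $g \in \mathcal{A}$, the set coded by $t(b_{X_1},\ldots,b_{X_k})$ is neither $\hat X$ nor $g$. The paper gets this by elementary counting inside $N$: the tuples in $p(N)$ whose $\sigma$-coordinate has its first $m$ bits determined by the others form a $2^{-m}$-fraction of $p(N)$ (Lemma \ref{lem:nontrivial-SSy-incomparability}), and a majority-vote function in $\SSy(N)$ must disagree with any $g \notin \SSy(N)$ (Lemma \ref{lem:nontrivial-SSy-avoidance}). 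Your route instead demands that each $\mathcal{S}_F$, a countable Scott set, avoid the continuum-sized set $\{g_X : X \notin F\} \cup \mathcal{A}$ throughout its iterated (S3)-closure. Your genericity lemma (a Cohen generic over an oracle computing $T$ lies on $T$ only if $[T]$ has interior) is correct, but it only covers trees computable from the \emph{original} oracle $\bigoplus_{X \in F} g_X \oplus \SSy(N)$. After the first path $P$ is adjoined, later trees are computable from $P$, which is not generic, and nothing you state guarantees that $g_X$ ($X \notin F$) avoids their paths; you would need a genericity-preservation argument for the chosen paths, uniformly in $F$ and without choice, which is not supplied. I recommend replacing the two-stage plan by a single forcing on node-indexed variables as in the paper, where both difficulties disappear.
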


Fix $a \in N - \omega$.
 For types, we shall mean types of $N$.

For each $n \leq \omega$ and $\sigma \in 2^n$,
 let $x_\sigma$ be a variable.
 If $m \leq n$, $\sigma_1,\ldots,\sigma_k \in 2^n$ and 
  $\phi(x_{\sigma_1},\ldots,x_{\sigma_k})$ contains \emph{no} quantifiers over any $x_{\sigma_i}$,
  then the \emph{$m$-reduct} of $\phi$ is the formula
  $$
    \phi(x_{\sigma_1},\ldots,x_{\sigma_k}; x_{\sigma_1 \upharpoonright m}, \ldots, x_{\sigma_k \upharpoonright m}),
  $$
  i.e., the formula obtained by simultaneously substituting $x_{\sigma_i \upharpoonright m}$'s for $x_{\sigma_i}$'s in $\phi$,
  where $\sigma \upharpoonright m$ is the sequence consisting of the first $m$ bits of $\sigma$.
  We also call the original $\phi$ an \emph{$n$-ramification} of its $m$-reduct.
  
A \emph{condition} $p$ is a finite type in $(x_\sigma: \sigma \in 2^{n_p})$ for some $n_p \in \omega$,
 s.t. $p$ contains no quantifiers over any $x_\sigma$, 
   $p \vdash x_\sigma \in 2^a$ and 
   there exists a positive $r \in \mathbb{Q}$ with
    $$
    	N \models |p(N)| > r (2^{a})^{2^{n_p}},
    $$
    where $p(N)$ is the set of realizations of $p$ in $N$.
 Let $P$ be the set of conditions.
 For $p,q \in P$, $q \leq p$ iff
 $n_q \geq n_p$ and $q$ contains every $n_q$-ramification of every $\phi \in p$.
 
For a descending sequence $\vec{p} = (p_i: i \in \omega)$ from $P$ s.t. $\lim_i n_{p_i} = \infty$,
 let $G_{\vec{p}}$ be the set of $\phi(x_{f_1}, \ldots x_{f_k})$ ($f_i \in 2^\omega$) s.t.
  the $n_{p_i}$-reduct of $\phi$ is in $p_i$ for some $i$.

\begin{lemma}\label{lem:nontrivial-SSy-generic-type}
If $\vec{p}$ and $G_{\vec{p}}$ are as above then $G_{\vec{p}}$ is a type of $N$.
\end{lemma}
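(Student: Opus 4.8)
The plan is to show that $G_{\vec p}$ is \emph{finitely satisfiable} in $N$, i.e.\ that every finite subset of $G_{\vec p}$ is realized in $N$; this is exactly what is needed for $G_{\vec p}$ to be a (consistent) type of $N$ in the variables $(x_f : f \in 2^\omega)$, realizable in an elementary extension, in the same sense in which the word \emph{type} is used in \S\ref{s:Ehrenfeucht}. The one combinatorial fact that drives everything is that \emph{reduction composes}: for a level-$\omega$ formula $\phi$ and $m \le n$, the $m$-reduct of $\phi$ agrees with the $m$-reduct of its $n$-reduct, because $(f \uh n)\uh m = f \uh m$. Equivalently, the $n$-reduct of $\phi$ is one of the $n$-ramifications of the $m$-reduct of $\phi$; this is the bridge between the two levels appearing in the definition of $\le$ on $P$, and it will let me slide a reduct down the descending sequence $\vec p$.

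The core step is then to collapse a finite fragment into a single condition. Given $\phi_1, \ldots, \phi_\ell \in G_{\vec p}$, I would choose for each $j$ a witnessing index $i_j$ with the $n_{p_{i_j}}$-reduct $\psi_j$ of $\phi_j$ lying in $p_{i_j}$, and set $i^\ast = \max_j i_j$. Since $\vec p$ is descending we have $p_{i^\ast} \le p_{i_j}$ and $n_{p_{i^\ast}} \ge n_{p_{i_j}}$ for every $j$, so by the composition fact the $n_{p_{i^\ast}}$-reduct $\chi_j$ of $\phi_j$ is an $n_{p_{i^\ast}}$-ramification of $\psi_j$; by the definition of $\le$ this gives $\chi_j \in p_{i^\ast}$. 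Thus all of $\phi_1, \ldots, \phi_\ell$ reduce, at the common level $n_{p_{i^\ast}}$, into the one condition $p_{i^\ast}$.

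Finally I would invoke the size clause in the definition of a condition. Because $N \models |p_{i^\ast}(N)| > r(2^a)^{2^{n_{p_{i^\ast}}}} > 0$, the condition $p_{i^\ast}$ is realized in $N$, say by an assignment $v$ giving each $x_\tau$, $\tau \in 2^{n_{p_{i^\ast}}}$, a value $v_\tau$ with $N \models v_\tau < 2^a$. Setting $x_f := v_{f \uh n_{p_{i^\ast}}}$ for every real $f$ yields an assignment of the level-$\omega$ variables under which each $\phi_j$ evaluates exactly as $\chi_j$ does under $v$; since $\chi_j \in p_{i^\ast}$ holds at $v$, the whole fragment $\phi_1, \ldots, \phi_\ell$ holds simultaneously in $N$. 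This establishes finite satisfiability, and hence the lemma.

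I expect the main difficulty to be bookkeeping rather than anything conceptual: one must check that reduction commutes with the Boolean connectives (so that, e.g., the reduct of $\neg\phi$ is the negation of the reduct of $\phi$, which is what forbids $\phi, \neg\phi \in G_{\vec p}$ simultaneously and so secures global consistency), and one must handle cleanly the case where distinct reals $f \ne g$ collapse to the same restriction $f \uh n_{p_{i^\ast}} = g \uh n_{p_{i^\ast}}$, so that $x_f$ and $x_g$ literally become the same variable in $\chi_j$ and receive the same value under $v$. I would also note that $\lim_i n_{p_i} = \infty$ is not actually required to realize any single finite fragment; for the present statement it suffices that the $n_{p_i}$ are non-decreasing so that $p_{i^\ast}$ sits below every $p_{i_j}$. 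The limit hypothesis is instead what guarantees arbitrarily fine restrictions, and will later be used to force $x_f \ne x_g$ for distinct $f, g$.
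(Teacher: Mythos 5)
Your proposal is correct and follows essentially the same route as the paper: pass to a single condition $p_{i^\ast}$ far enough down the descending sequence so that every formula of the finite fragment is a ramification of a formula in $p_{i^\ast}$, realize $p_{i^\ast}$ in $N$ (its realization set is nonempty by the size clause), and pull the assignment back along $f \mapsto f \uh n_{p_{i^\ast}}$. The paper's proof is just a terser version of this; your extra remarks on composition of reducts and on distinct reals collapsing to the same restriction are accurate elaborations of details the paper leaves implicit.
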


\begin{proof}
For every finite subset $H$ of $G_{\vec{p}}$,
 there is a fixed $i$ s.t. formulas in $H$ are ramifications of formulas in $p_i$.
As $p_i$ is a finite type of $N$,
 $p_i$ is realized in $N$ by some tuple,
 which also realizes $H$.
\end{proof}

To construct $G_{\vec{p}}$ as above,
 we should be able to extend conditions non-trivially.
 
\begin{lemma}\label{lem:nontrivial-SSy-extension}
Each condition $p$ can be extended to another condition $q$ with $n_q > n_p$.
\end{lemma}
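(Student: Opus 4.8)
The plan is to take $n_q = n_p + 1$ and let $q$ be the (finite) set of all $(n_p+1)$-ramifications of the formulas in $p$. Since $p$ is finite and each of its formulas mentions only finitely many variables $x_\sigma$ with $\sigma \in 2^{n_p}$, each such $\sigma$ having exactly the two extensions $\sigma 0, \sigma 1$ in $2^{n_p+1}$, this $q$ is again a finite type with no quantifier over any $x_\tau$; ramifying the clauses $x_\sigma < 2^a$ produces $x_\tau < 2^a$ for every $\tau \in 2^{n_p+1}$, so $q \vdash x_\tau \in 2^a$. By construction $q$ contains every $n_q$-ramification of every $\phi \in p$, and $n_q = n_p+1 > n_p$, so $q \leq p$ is automatic and the only thing to verify is that $q \in P$, i.e.\ that $|q(N)|$ is a non-infinitesimal fraction of $(2^a)^{2^{n_p+1}}$.

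Write $D = 2^a$ and $L = 2^{n_p}$. First I would record a combinatorial description of $q(N)$. Call a map $s : 2^{n_p} \to \{0,1\}$ a \emph{section}, and for an assignment $v : 2^{n_p+1} \to D$ let $u^s$ be the level-$n_p$ assignment $u^s(\sigma) = v(\sigma^\frown s(\sigma))$. Unwinding the definition of ramification, $v$ realizes $q$ iff $u^s$ realizes $p$ for every one of the $2^L$ sections $s$. Writing $v$ as the pair $(v^0,v^1)$ with $v^b(\sigma) = v(\sigma^\frown b)$, the $u^s$ are exactly the coordinatewise mixes of $v^0$ and $v^1$, so
\[
  q(N) = \{(v^0,v^1) \in (D^{L})^2 : \text{every coordinatewise mix of } v^0, v^1 \text{ lies in } p(N)\}.
\]

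The main step, and the main obstacle, is to bound $|q(N)|/D^{2L}$ below by a standard positive rational. Here I would apply Cauchy--Schwarz once for each of the $L$ coordinates, equivalently bounding the box norm of the indicator $1_{p(N)}$ below by its average: writing $\rho = |p(N)|/D^{L}$ for the density of $p$, iterated Cauchy--Schwarz yields
\[
  \frac{|q(N)|}{D^{2L}} \;=\; \mathbb{E}_{v^0,v^1}\prod_{s} 1_{p(N)}(u^s) \;\geq\; \rho^{\,2^{L}}.
\]
Crucially this is a purely finitary inequality about internal sums and cardinalities — after clearing denominators it reads $N \models |q(N)|\cdot D^{L\cdot 2^{L}-2L} \geq |p(N)|^{2^{L}}$ — hence it is provable in $\PA$ and holds in $N$ for the internal (definable) sets $p(N)$ and $q(N)$. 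Since $p \in P$ fixes a standard rational $r > 0$ with $N \models |p(N)| > r\,D^{L}$, we get $\rho > r$ and therefore $N \models |q(N)| > r^{2^{L}} D^{2L}$. As $n_p$ is standard and $r$ is a standard positive rational, $r' = r^{\,2^{2^{n_p}}}$ is again a standard positive rational witnessing $q \in P$, which completes the extension.

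The delicate point worth flagging is precisely why the density cannot collapse to an infinitesimal. Because the ordering forces $q$ to contain all \emph{mixed} ramifications, $q(N)$ is not the product $p(N) \times p(N)$ but the far thinner set of pairs \emph{all} of whose $2^{L}$ mixes satisfy $p$, and a naive union or independence estimate would give only a positive, possibly infinitesimal, fraction. The box-norm form of Cauchy--Schwarz is exactly what upgrades ``density $>r$'' to the clean standard lower bound $r^{2^{2^{n_p}}}$; pinning down the normalization of this finitary inequality inside $N$ is the one routine-but-careful computation I would carry out in full.
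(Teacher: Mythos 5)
Your construction of $q$ --- the set of all $(n_p+1)$-ramifications of the formulas of $p$ --- is exactly the paper's, whose proof is a single line that does not verify the density clause in the definition of a condition. Your verification of that clause is correct and is precisely the content the paper leaves implicit: $q(N)$ is the set of pairs $(v^0,v^1)$ all of whose $2^{2^{n_p}}$ coordinatewise mixes lie in $p(N)$, the iterated Cauchy--Schwarz (box-norm) inequality bounds its density below by $\rho^{2^{2^{n_p}}}$, this finitary counting inequality is provable in $\PA$ and hence holds for the definable sets $p(N)$ and $q(N)$ inside $N$, and $r^{2^{2^{n_p}}}$ is a standard positive rational because $n_p$ and $r$ are standard.
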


\begin{proof}
Let $q$ be the set of $(n_p+1)$-ramifications of all formulas in $p$.
Then $q$ is as desired.
\end{proof}

The lemma below will be used to that
 if $\mathcal{X}$ and $\mathcal{Y}$ are different subsets of $2^\omega$ then $\SSy(M_\mathcal{X})$ and $\SSy(M_\mathcal{Y})$ are different.

\begin{lemma}\label{lem:nontrivial-SSy-incomparability}
Suppose that $p \in P$ and $F: N^{k} \to N$ is definable in $N$.
 Then there exists $q \leq p$ s.t.
  $n_q = n_p$ and
   every $(\sigma,\sigma_1,\ldots,\sigma_k)$ from $2^{n_q}$ with $\sigma \neq \sigma_1, \ldots, \sigma_k$ 
   corresponds to some $i < \omega$ with
    $q \vdash (x_\sigma)_i \neq (F(x_{\sigma_1}, \ldots, x_{\sigma_k}))_i$.
\end{lemma}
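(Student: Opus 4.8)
The plan is to handle the finitely many relevant tuples one at a time, at each stage adjoining a single inequality to the current condition so as to diagonalise against $F$ for that tuple, while preserving a positive standard density of realisations. There are at most $(2^{n_p})^{k+1}$ tuples $(\sigma,\sigma_1,\dots,\sigma_k)$ from $2^{n_p}$ with $\sigma\neq\sigma_1,\dots,\sigma_k$, and each stage will multiply the density by a fixed positive standard rational, so finitely many stages suffice; since we never adjoin ramifications and never raise $n$, the final $q$ keeps $n_q=n_p$. Once an inequality $(x_\sigma)_i\neq(F(x_{\sigma_1},\dots,x_{\sigma_k}))_i$ has been forced at some stage it persists to the final, stronger $q$, so a single $q$ serves all tuples.

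For one stage, suppose the current condition $p'$ satisfies $N\models|p'(N)|>r'(2^a)^{2^{n_p}}$ for a positive standard rational $r'$, and fix a tuple with $\sigma\neq\sigma_j$ for all $j$. The key move is to isolate the variable $x_\sigma$: I would partition $p'(N)$ into \emph{slices} according to the values of all coordinates other than the $\sigma$-coordinate. Because $\sigma$ differs from each $\sigma_j$, the value $d=F(c_{\sigma_1},\dots,c_{\sigma_k})$ is constant on each slice, and only $c_\sigma\in 2^a$ is free. Within a slice, the number of $c_\sigma$ agreeing with $d$ on the first $m$ bits is at most $2^{a-m}$, so bounding the complementary count in each slice and summing over all $(2^a)^{2^{n_p}-1}$ slices shows that the realisations with $c_\sigma$ differing from $d$ at some bit below $m$ form a set of density exceeding $r'-2^{-m}$. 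This slice count is the heart of the argument, and the constancy of $d$ on slices is the only place the hypothesis $\sigma\neq\sigma_j$ is used.

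Now choose a standard $m$ with $2^{-m}<r'/2$, so this "good" set has density $>r'/2$. It is the union of the $m$ sets $G_i=\{(x_\sigma)_i\neq(F(x_{\sigma_1},\dots,x_{\sigma_k}))_i\}$ for $i<m$, so by pigeonhole some single $G_i$ has density $>r'/(2m)$, again a positive standard rational since $m$ is standard. Setting $q=p'\cup\{(x_\sigma)_i\neq(F(x_{\sigma_1},\dots,x_{\sigma_k}))_i\}$ yields $q\leq p'$ with $n_q=n_p$, still a condition, and $q\vdash(x_\sigma)_i\neq(F(\dots))_i$ as required; this adjoined formula contains no quantifier over any $x_\tau$, since $F$ is definable with its quantifiers confined to auxiliary variables.

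The main obstacle is really two intertwined points. First, the naive count only produces a $c_\sigma$ differing from $d$ \emph{somewhere} among the first $m$ bits, i.e. a disjunction, whereas the lemma demands a single standard bit $i$; the pigeonhole over the standard-finite index set $\{0,\dots,m-1\}$ is exactly what converts the disjunction into one bit, at the cost of shrinking the density by the standard factor $m$. Second, all of the counting ($2^a$, $2^{a-m}$, the slice decomposition, the density inequalities) must be carried out internally in $N$, where $|p'(N)|$ is a genuine element of $N$ and the relevant identities are theorems of $\PA$; care is needed only to keep every threshold ($r'$, $2^{-m}$, $r'/(2m)$) a standard rational, so that each intermediate condition, and hence the final $q$, is genuinely a condition.
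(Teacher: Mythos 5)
Your proposal is correct and takes essentially the same approach as the paper: the paper likewise reduces to one tuple at a time, bounds the realisations agreeing with $F$ on the first $m$ bits by $2^{-m}(2^a)^{2^{n_p}}$ via the same "first $m$ bits of $b_\sigma$ are determined by the other coordinates" slice count, and then extracts a single standard bit $i$ with positive standard rational density. Your write-up merely makes explicit the pigeonhole step and the bookkeeping of densities across the finitely many stages, which the paper leaves implicit.
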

 
\begin{proof}
It suffices to prove that
   every $(\sigma,\sigma_1,\ldots,\sigma_k)$ from $2^{n_p}$ with $\sigma \neq \sigma_1, \ldots, \sigma_k$ 
	corresponds to some $i < \omega$ and $q \leq p$ s.t.
	$n_q = n_p$ and $q \vdash (x_\sigma)_i \neq (F(x_{\sigma_1}, \ldots, x_{\sigma_k}))_i$.
	
Let $n = n_p$, 
 $r \in \mathbb{Q}$ be positive s.t. $|p(N)| > r (2^{a})^{2^n}$ in $N$.
Fix $(\sigma,\sigma_1,\ldots,\sigma_k)$ from $2^{n}$ as above.
For each $m \in \omega$, in $N$ the cardinality of the following set
 $$
  \{(b_\tau: \tau \in 2^n) \in p(N): \forall i < m ((b_\sigma)_i = (F(b_{\sigma_1}, \ldots, b_{\sigma_k}))_i)\}
 $$
 is at most $2^{-m} (2^{a})^{2^n}$,
 since each $(b_\tau: \tau \in 2^n)$ in the set has the first $m$ bits of $b_\sigma$ determined by $(b_{\sigma_1},\ldots,b_{\sigma_k})$.
 Hence, there must be some $i \in \omega$ and some positive $\epsilon \in \mathbb{Q}$ s.t.
  in $N$,
 $$
  |\{(b_\tau: \tau \in 2^n) \in p(N): (b_\sigma)_i \neq (F(b_{\sigma_1}, \ldots, b_{\sigma_k}))_i\}| > \epsilon (2^{a})^{2^n}.
 $$
 So $q = p \cup \{(x_\sigma)_i \neq (F(x_{\sigma_1}, \ldots, x_{\sigma_k}))_i\}$ is as desired.
\end{proof}

To exclude certain $g$'s from $\SSy(M_\mathcal{X})$'s,
 we prove one more lemma below.

\begin{lemma}\label{lem:nontrivial-SSy-avoidance}
If $g \not\in \SSy(N)$, $F: N^k \to N$ is definable in $N$ and $p \in P$
 then there exists $q \leq p$ s.t.
  $n_q = n_p$ and 
  every $(\sigma_1,\ldots,\sigma_k) \in (2^{n_q})^k$ corresponds to some $i < \omega$
   with $q \vdash g(i) \neq (F(x_{\sigma_1}, \ldots, x_{\sigma_k}))_i$.
\end{lemma}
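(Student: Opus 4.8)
The plan is to follow the pattern of Lemma~\ref{lem:nontrivial-SSy-incomparability}: since there are only finitely many tuples $(\sigma_1,\ldots,\sigma_k) \in (2^{n_p})^k$, it suffices to treat one tuple at a time, each time passing to an extension $q \leq p$ with $n_q = n_p$ that adds a single formula of the form $(F(x_{\sigma_1},\ldots,x_{\sigma_k}))_i \neq g(i)$, and then iterating (earlier constraints survive further extensions, so the final $q$ witnesses the conclusion for every tuple). Thus I fix one tuple $(\sigma_1,\ldots,\sigma_k)$ and a positive $r \in \mathbb{Q}$ with $N \models |p(N)| > r(2^a)^{2^n}$, where $n = n_p$, and I look for a standard $i$ and a positive $\epsilon \in \mathbb{Q}$ such that the \emph{disagreement set}
$$
  A_i = \{(b_\tau: \tau \in 2^n) \in p(N): (F(b_{\sigma_1},\ldots,b_{\sigma_k}))_i \neq g(i)\}
$$
satisfies $N \models |A_i| > \epsilon(2^a)^{2^n}$. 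Given such $i$ and $\epsilon$, the set $q = p \cup \{(F(x_{\sigma_1},\ldots,x_{\sigma_k}))_i \neq g(i)\}$ is a condition with $n_q = n_p$ as required; note that for standard $i$ the value $g(i)$ is a concrete bit and $F$ is definable, so the added formula contains no quantifier over any $x_\tau$ and is a legitimate ingredient of a condition.

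The key step is to produce such an $i$, and here is where the hypothesis $g \notin \SSy(N)$ enters. I would argue by contradiction: suppose no such $i$ and $\epsilon$ exist, i.e.\ $N \models |A_i| \leq \epsilon(2^a)^{2^n}$ for every standard $i$ and every positive $\epsilon \in \mathbb{Q}$; in particular, taking $\epsilon = r/2$ yields $N \models |A_i| \leq \tfrac{r}{2}(2^a)^{2^n} < \tfrac{1}{2}|p(N)|$ for every standard $i$. Working inside $N$, define a subset $D$ by a majority vote:
$$
  D = \{i: |\{(b_\tau: \tau \in 2^n) \in p(N): (F(b_{\sigma_1},\ldots,b_{\sigma_k}))_i = 1\}| > \tfrac{1}{2}|p(N)|\}.
$$
Since $n$ is standard, $p(N)$ is a bounded definable subset of $(2^a)^{2^n}$, so its cardinality and the above counts are elements of $N$; hence $D$ is a definable subset of $N$ and $D \cap \omega \in \SSy(N)$.

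It then remains to check that $D \cap \omega = g$, which contradicts $g \notin \SSy(N)$. For a standard $i$ the bound $|A_i| < \tfrac{1}{2}|p(N)|$ means that strictly more than half of the realizations $(b_\tau) \in p(N)$ satisfy $(F(b_{\sigma_1},\ldots,b_{\sigma_k}))_i = g(i)$; reading off the two cases $g(i)=1$ and $g(i)=0$ shows that the majority bit at position $i$ is exactly $g(i)$, i.e.\ $i \in D \iff g(i)=1$. Thus $D \cap \omega = g$, the desired contradiction, so an $i$ with $|A_i|$ of positive proportion must exist. I expect the main obstacle to be precisely this recovery step: the whole argument hinges on recognizing that an inability to force a disagreement at \emph{any} bit would make $g$ definable in $N$ via a majority vote over the realizations of $p$, so that $g \notin \SSy(N)$ is exactly what forces the disagreement. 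The remaining bookkeeping — the reduction to a single tuple, the verification that $q$ is a condition with $n_q = n_p$, and the finite iteration over all tuples — is routine.
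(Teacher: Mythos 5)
Your proposal is correct and is essentially the paper's own argument: the paper likewise reduces to one tuple at a time and defines the majority-vote function $h(i) = $ the least $j<2$ realized by at least half of $p(N)$, observes $h \in \SSy(N)$, and picks a standard $i$ with $g(i) \neq h(i)$ to get a disagreement set of size $\geq |p(N)|/2 > (r/2)(2^a)^{2^n}$. Your contrapositive phrasing (if no bit admits a positive-proportion disagreement, the majority vote reconstructs $g$ inside $N$) is the same idea stated in the opposite direction.
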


\begin{proof}
It suffices to prove that
 every $(\sigma_1,\ldots,\sigma_k) \in (2^{n_q})^k$ corresponds to some $q \leq p$ and $i < \omega$ s.t.
  $n_q = n_p$ and $q \vdash g(i) \neq (F(x_{\sigma_1},\ldots,x_{\sigma_k}))_i$.
  
Fix all the data and $(\sigma_1,\ldots,\sigma_k) \in (2^{n_q})^k$ as above.
  Define a function $h: \omega \to 2$ as follows.
   Let $h(i)$ be the least $j < 2$, s.t. in $N$,
   $$
    |\{(b_\tau: \tau \in 2^{n_p}) \in p(N): j = (F(b_{\sigma_1}, \ldots, b_{\sigma_k}))_i\}| \geq |p(N)|/2.
   $$
  So $h \in \SSy(N)$.
  Since $g \not\in \SSy(N)$, we can pick $i < \omega$ s.t. $g(i) \neq h(i)$.
  Then $q = p \cup \{g(i) \neq (F(x_{\sigma_1}, \ldots, x_{\sigma_k}))_i\}$ is as desired.
\end{proof}

By the above lemmata, we can construct $\vec{p} = (p_i: i \in \omega)$ s.t.
\begin{enumerate}
 \item $p_{i+1} \leq p_i \in P$;
 \item $\lim_i n_{p_i} = \infty$;
 \item For each $p_i$ and each $N$-definable function $F: N^k \to N$,
  there exist $p_j \leq p_i$ and $m < \omega$, s.t.
  if $\sigma_1,\ldots,\sigma_k \in 2^{n_{p_j}}$ and $\sigma \in 2^{n_{p_j}} - \{\sigma_1,\ldots,\sigma_k\}$
   then $p_j \vdash \exists n < m ((x_\sigma)_n \neq (F(x_{\sigma_1},\ldots,x_{\sigma_k}))_n)$;
 \item For each $p_i$, each $g \in \mathcal{A}$ and each $N$-definable function $F: N^k \to N$,
  there exist $p_j \leq p_i$ and $m < \omega$ s.t.
  every $\sigma_1,\ldots,\sigma_k \in (2^{n_{p_j}})^k$ corresponds to some $n < m$ with
   $p_j \vdash g(n) \neq (F(x_{\sigma_1}, \ldots, x_{\sigma_k}))_n$.
\end{enumerate}
So $G_{\vec{p}}$ is a type in $(x_f: f \in 2^\omega)$ over $N$.
 Let $(a_f: f \in 2^\omega)$ be a realization of $G_{\vec{p}}$ in some $N' \succ N$.
 If $\mathcal{X} \subseteq 2^\omega$, let $M_\mathcal{X}$ be the Skolem hull of $N \cup \{a_f: f \in \mathcal{X}\}$ in $N'$.
 Then $M_\mathcal{X}$'s ($\mathcal{X} \subseteq 2^\omega)$ are as desired.

This finishes the proof of Theorem \ref{thm:nontrivial-continuum-SSy}.
 
\begin{corollary}[$\ZFC + \MA$]
\label{cor:MA-nontrivial-continuum-SSy}
For every non-standard $N \models \PA$ s.t. $|N| < 2^\omega$, 
 there exists a family $(M_\mathcal{X}: \mathcal{X} \subseteq 2^\omega)$ s.t.
  $N \prec M_\mathcal{X}$, $|M_\mathcal{X}| = |\SSy(M_\mathcal{X})| = \max\{|\SSy(N)|,|\mathcal{X}|\}$ and
$$
  \mathcal{X} \subseteq \mathcal{Y} \Leftrightarrow M_\mathcal{X} \preceq M_\mathcal{Y} 
  \Leftrightarrow \SSy(M_\mathcal{X}) \subseteq \SSy(M_\mathcal{Y}).
$$
  
Moreover, if $\mathcal{A} \subset 2^\omega - \SSy(N)$ has cardinality $< 2^\omega$
 then we can have $\mathcal{A} \cap \SSy(M_\mathcal{X}) = \emptyset$ for all $\mathcal{X} \subseteq 2^\omega$.
\end{corollary}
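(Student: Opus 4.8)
The plan is to recast the construction behind Theorem~\ref{thm:nontrivial-continuum-SSy} as forcing with the poset $P$ of conditions, and to replace the explicit $\omega$-length recursion that produced $\vec{p}$ by an application of $\MA$. The only feature of the earlier argument that used countability of $N$ and of $\mathcal{A}$ was that requirements (3) and (4) in the construction of $\vec{p}$ amounted to countably many demands, which an $\omega$-sequence could meet one at a time. When $|N| < 2^\omega$ there are only $|N|$ many $N$-definable functions $F\colon N^k \to N$, and when $|\mathcal{A}| < 2^\omega$ there are fewer than $2^\omega$ pairs $(g,F)$ to handle; so the total number of demands is some $\kappa < 2^\omega$. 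Thus I would aim to produce, instead of $\vec{p}$, a filter $G \subseteq P$ meeting $\kappa$ many dense sets, and then read off the generic type from $G$ exactly as $G_{\vec p}$ was read off from $\vec p$.

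First I would isolate the relevant dense sets. For $n \in \omega$ let $D_n = \{q \in P : n_q \geq n\}$, which is dense by Lemma~\ref{lem:nontrivial-SSy-extension}. For each $N$-definable $F$ and each $n$ let $D_{F,n}$ be the set of $q$ with $n_q \geq n$ such that every $(\sigma,\sigma_1,\ldots,\sigma_k)$ from $2^{n_q}$ with $\sigma \neq \sigma_1,\ldots,\sigma_k$ is separated by $q$ as in Lemma~\ref{lem:nontrivial-SSy-incomparability}; combining Lemmas~\ref{lem:nontrivial-SSy-extension} and~\ref{lem:nontrivial-SSy-incomparability} shows $D_{F,n}$ is dense. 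I ask for cofinally many levels here because two distinct $f,f' \in 2^\omega$ may agree on a long initial segment, so separation is only useful at a level past which $f\uh n \neq f'\uh n$. For each $g \in \mathcal{A}$ and each $N$-definable $F$ let $D_{g,F}$ be the conditions satisfying the conclusion of Lemma~\ref{lem:nontrivial-SSy-avoidance}, which is dense by that lemma; here a single level suffices, since the lemma separates $g$ from $F$ simultaneously for all tuples at level $n_q$. There are $\omega + |N|\cdot\omega + |\mathcal{A}|\cdot|N| = \kappa < 2^\omega$ such dense sets in all.

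The main new ingredient, and the step I expect to be the crux, is that $P$ is \emph{ccc}, so that $\MA$ applies. The point is that incompatibility in $P$ is governed by an internal counting measure. If $n_p = n_q$ and $p(N) \cap q(N)$ has non-infinitesimal proportion in $(2^a)^{2^{n_p}}$, then $p \cup q$ is again a condition and a common extension; hence incompatible conditions at a common level $n$ have $|p(N)\cap q(N)| < \epsilon (2^a)^{2^n}$ in $N$ for every standard $\epsilon > 0$. Now fix $n$ and a standard $k$, and suppose $p_0,\ldots,p_k$ are pairwise incompatible with $|p_j(N)| > (1/k)(2^a)^{2^n}$ for each $j$. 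A Bonferroni estimate in $N$ gives
$$
 (2^a)^{2^n} \;\geq\; \Bigl|\bigcup_j p_j(N)\Bigr| \;>\; \frac{k+1}{k}(2^a)^{2^n} - \binom{k+1}{2}\,\epsilon\,(2^a)^{2^n},
$$
which is a contradiction once the standard $\epsilon$ is chosen small enough, each pairwise intersection having infinitesimal density. Hence at each level $n$ an antichain of conditions of density $> 1/k$ has at most $k$ members, and partitioning an arbitrary antichain by $n_q$ and by a rational density lower bound exhibits it as a countable union of finite sets. So $P$ is ccc.

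Finally, $\MA$ yields a filter $G$ meeting all of $D_n$, $D_{F,n}$ and $D_{g,F}$. Directedness of $G$ makes $G_G = \{\phi(x_{f_1},\ldots,x_{f_k}) : \exists q \in G,\ \phi(x_{f_1\uh n_q},\ldots,x_{f_k\uh n_q}) \in q\}$ a well-defined type of $N$, each finite fragment lying inside a single $q \in G$ and hence realized in $N$, exactly as in Lemma~\ref{lem:nontrivial-SSy-generic-type}. Realizing it by $(a_f : f \in 2^\omega)$ in some $N' \succ N$ and setting $M_\mathcal{X}$ to be the Skolem hull of $N \cup \{a_f : f \in \mathcal{X}\}$, I would then verify the three equivalences and the avoidance clause verbatim as in the proof of Theorem~\ref{thm:nontrivial-continuum-SSy}: meeting the $D_{F,n}$ forces $a_f$ to differ from $F(a_{f_1},\ldots,a_{f_k})$ whenever $f \notin \{f_1,\ldots,f_k\}$, which gives both injectivity of $f \mapsto a_f$ and the implication $\mathcal{X}\not\subseteq\mathcal{Y} \Rightarrow \SSy(M_\mathcal{X})\not\subseteq\SSy(M_\mathcal{Y})$, while meeting the $D_{g,F}$ keeps every $g \in \mathcal{A}$ out of each $\SSy(M_\mathcal{X})$. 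The cardinality equalities $|M_\mathcal{X}| = |\SSy(M_\mathcal{X})| = \max\{|\SSy(N)|,|\mathcal{X}|\}$ are read off exactly as in the theorem.
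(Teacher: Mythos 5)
Your proposal is correct and follows essentially the same route as the paper: observe that $P$ is ccc, apply $\MA$ to the fewer than $2^\omega$ dense sets arising from Lemmata \ref{lem:nontrivial-SSy-extension}, \ref{lem:nontrivial-SSy-incomparability} and \ref{lem:nontrivial-SSy-avoidance}, and read the generic type off the resulting filter exactly as in Theorem \ref{thm:nontrivial-continuum-SSy}. The only difference is that you spell out the density-counting argument for the countable chain condition, which the paper merely asserts is easy to see, and your argument for it is sound.
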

 
\begin{proof}
It is easy to see that the poset $P$ in the proof of Theorem \ref{thm:nontrivial-continuum-SSy} satisfies the countable chain condition,
 even if $N$ is uncountable.
By $\ZFC + \MA$, we can apply Lemmata \ref{lem:nontrivial-SSy-extension}, \ref{lem:nontrivial-SSy-incomparability} and \ref{lem:nontrivial-SSy-avoidance}
 to $N$ and $\mathcal{A}$ both of cardinality less than the continuum,
 and obtain a filter $\mathcal{F} \subset P$ s.t.
\begin{enumerate}
 \item Each $p \in \mathcal{F}$ has an extension $q \in \mathcal{F}$ with $n_q > n_p$;
 \item For each $p \in \mathcal{F}$ and each $N$-definable function $F: N^k \to N$,
  there exist $q \in \mathcal{F}$ and $m < \omega$, s.t.
   $q \leq p$, and
   if $\sigma_1,\ldots,\sigma_k \in 2^{n_{q}}$ and $\sigma \in 2^{n_{q}} - \{\sigma_1,\ldots,\sigma_k\}$
   then $q \vdash \exists n < m ((x_\sigma)_n \neq (F(x_{\sigma_1},\ldots,x_{\sigma_k}))_n)$;
 \item For each $p \in \mathcal{F}$, each $g \in \mathcal{A}$ and each $N$-definable function $F: N^k \to N$,
  there exist $q \in \mathcal{F}$ and $m < \omega$ s.t.
   $q \leq p$,
   every $\sigma_1,\ldots,\sigma_k \in (2^{n_{q}})^k$ corresponds to some $n < m$ with
    $q \vdash g(n) \neq (F(x_{\sigma_1}, \ldots, x_{\sigma_k}))_n$.
\end{enumerate}
Then we define $G_\mathcal{F}$ to be the set of formulas in $(x_f: f \in 2^\omega)$
 s.t. every formula in $G_\mathcal{F}$ has a reduct in some $p \in \mathcal{F}$.
It can be proved that $G_\mathcal{F}$ is a type of $N$,
 similar to Lemma \ref{lem:nontrivial-SSy-generic-type}.
Finally, take a realization $(a_f: f \in 2^\omega)$ of $G_\mathcal{F}$
 and let $M_\mathcal{X}$ be an extension of $N$ generated by $N \cup \{a_f: f \in \mathcal{X}\}$.
\end{proof}

Corollary \ref{cor:MA-nontrivial-continuum-SSy} can be extended to
 a partial answer to the Scott Set Problem.

\begin{corollary}[$\ZFC + \MA$]\label{cor:MA-SSy-cone-avoidance}
Suppose that $M$ is a countable non-standard model of $\PA$,
 $\mathcal{A} \subset 2^\omega$ is of cardinality $< 2^\omega$,
 and $\mathcal{B} \subset 2^\omega$ is countable
 and s.t. the Turing ideal generated by $\SSy(M) \cup \mathcal{B}$ is disjoint from $\mathcal{A}$.
Then there exists a family $(M_\mathcal{X}: \mathcal{X} \subseteq 2^\omega)$ s.t.
 $M \prec M_\mathcal{X}$, $\mathcal{B} \subseteq \SSy(M_\mathcal{X})$, $|M_\mathcal{X}| = |\SSy(M_\mathcal{X})| = \max\{\omega, |\mathcal{X}|\}$,
 and
$$
  \mathcal{X} \subseteq \mathcal{Y} \Leftrightarrow M_\mathcal{X} \preceq M_\mathcal{Y} 
  \Leftrightarrow \SSy(M_\mathcal{X}) \subseteq \SSy(M_\mathcal{Y}).
$$
\end{corollary}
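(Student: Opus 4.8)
The plan is to reduce the statement to Corollary~\ref{cor:MA-nontrivial-continuum-SSy} by first manufacturing a countable intermediate model $N$ whose standard system already contains $\mathcal{B}$ while remaining disjoint from $\mathcal{A}$, and then feeding $N$ into that corollary. The key new ingredient, and the reason the hypothesis is phrased in terms of the Turing ideal generated by $\SSy(M) \cup \mathcal{B}$, is that any model with $\mathcal{B} \subseteq \SSy(N)$ necessarily has $\SSy(N)$ containing that whole Turing ideal; the hypothesis that it is disjoint from $\mathcal{A}$ is exactly what makes disjointness from $\mathcal{A}$ achievable at all.

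First I would build a countable Scott set $\mathcal{S}$ with $\SSy(M) \cup \mathcal{B} \subseteq \mathcal{S}$ and $\mathcal{S} \cap \mathcal{A} = \emptyset$. Let $\mathcal{I}$ be the Turing ideal generated by $\SSy(M) \cup \mathcal{B}$; since $M$ is countable and $\mathcal{B}$ is countable, $\mathcal{I}$ is countable, and by hypothesis $\mathcal{I} \cap \mathcal{A} = \emptyset$. One then closes $\mathcal{I}$ under condition (S3) in $\omega$ stages with the usual dovetailing bookkeeping. At a typical stage one has a countable Turing ideal $\mathcal{J} \supseteq \mathcal{I}$ with $\mathcal{J} \cap \mathcal{A} = \emptyset$ and an infinite binary tree $T \in \mathcal{J}$, and must adjoin a path $f$ through $T$ so that the Turing ideal generated by $\mathcal{J} \cup \{f\}$ is still disjoint from $\mathcal{A}$; equivalently, for every $g \in \mathcal{A}$, every $Z \in \mathcal{J}$ and every Turing functional $\Phi_e$ one needs $\Phi_e^{Z \oplus f} \neq g$.

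This single-stage requirement, for uncountable $\mathcal{A}$, is where I expect the main work to lie, and it is handled by a cone-avoidance argument combined with $\MA$. I would force $f$ with the poset $\mathbb{Q}$ of infinite $\mathcal{J}$-computable subtrees of $T$ ordered by inclusion; since $\mathcal{J}$ is countable this poset is countable and hence ccc. For $g \in \mathcal{A}$ and $Z \in \mathcal{J}$ note that $g \not\leq_T Z$, because $\mathcal{J}$ is a Turing ideal disjoint from $\mathcal{A}$; the standard cone-avoidance argument then makes the set of $Q \in \mathbb{Q}$ forcing $\Phi_e^{Z \oplus f} \neq g$ dense, and the subtrees it produces remain $\mathcal{J}$-computable. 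These dense sets are indexed by triples $(g, Z, e)$ and so number fewer than $2^\omega$; together with the countably many dense sets that force $f$ to be a genuine path, $\MA$ supplies a generic filter and hence a path $f$ of the required kind. After $\omega$ stages the union $\mathcal{S}$ is a countable Scott set with $\SSy(M) \cup \mathcal{B} \subseteq \mathcal{S}$ and $\mathcal{S} \cap \mathcal{A} = \emptyset$.

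Finally I would iterate Ehrenfeucht's Theorem~\ref{thm:Ehrenfeucht} along an enumeration $\mathcal{B} = \{b_0, b_1, \ldots\}$: starting from $N_0 = M$, each $N_{i+1}$ is a countable elementary extension of $N_i$ with $b_i \in \SSy(N_{i+1}) \subseteq \mathcal{S}$, which is legitimate since $\SSy(N_i) \subseteq \mathcal{S}$ holds inductively (with base case $\SSy(M) \subseteq \mathcal{I} \subseteq \mathcal{S}$). The union $N = \bigcup_i N_i$ is then a countable model with $M \prec N$ and $\mathcal{B} \subseteq \SSy(N) = \bigcup_i \SSy(N_i) \subseteq \mathcal{S}$, so that $\SSy(N) \cap \mathcal{A} = \emptyset$ and in particular $\mathcal{A} \subseteq 2^\omega - \SSy(N)$. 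Applying Corollary~\ref{cor:MA-nontrivial-continuum-SSy} to this countable $N$ and to $\mathcal{A}$ (of cardinality $< 2^\omega$) produces a family $(M_\mathcal{X} : \mathcal{X} \subseteq 2^\omega)$ with $N \prec M_\mathcal{X}$, the three-way equivalence among $\mathcal{X} \subseteq \mathcal{Y}$, $M_\mathcal{X} \preceq M_\mathcal{Y}$ and $\SSy(M_\mathcal{X}) \subseteq \SSy(M_\mathcal{Y})$, the cardinalities $|M_\mathcal{X}| = |\SSy(M_\mathcal{X})| = \max\{|\SSy(N)|, |\mathcal{X}|\} = \max\{\omega, |\mathcal{X}|\}$, and $\mathcal{A} \cap \SSy(M_\mathcal{X}) = \emptyset$. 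Since $M \prec N \prec M_\mathcal{X}$ and $\mathcal{B} \subseteq \SSy(N) \subseteq \SSy(M_\mathcal{X})$, all the asserted properties follow at once.
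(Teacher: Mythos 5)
Your proposal is correct and follows essentially the same route as the paper: build (using $\MA$ together with the standard Jockusch--Soare cone-avoidance forcing) a countable Scott set $\mathcal{S}$ containing $\SSy(M)\cup\mathcal{B}$ and disjoint from $\mathcal{A}$, pass to a countable elementary extension $N$ of $M$ via Ehrenfeucht's Theorem~\ref{thm:Ehrenfeucht} with $\mathcal{B}\subseteq\SSy(N)\subseteq\mathcal{S}$, and then apply Corollary~\ref{cor:MA-nontrivial-continuum-SSy} to $N$ and $\mathcal{A}$. The only differences are presentational: you spell out the $\MA$-based Scott-set construction that the paper delegates to a citation, and you arrange $\mathcal{B}\subseteq\SSy(N)\subseteq\mathcal{S}$ where the paper asserts $\SSy(N)=\mathcal{S}$; both suffice.
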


\begin{proof}
By $\MA$ and well-known recursion theoretic technique (e.g., see \cite[Lemma 2.6]{Jockusch.Soare:1972.TAMS}),
 we can construct a countable Scott set $\mathcal{S}$ s.t.
 $\SSy(M) \cup \mathcal{B} \subseteq \mathcal{S}$
 and $\mathcal{A} \cap \mathcal{S} = \emptyset$.

By Ehrenfeucht's Theorem \ref{thm:Ehrenfeucht},
 $M$ has an elementary extension $N$ with $\SSy(N) = \mathcal{S}$.
The conclusion then follows from an application of Corollary \ref{cor:MA-nontrivial-continuum-SSy}
 to $N$ and $\mathcal{A}$.
\end{proof}

By Corollary \ref{cor:MA-SSy-cone-avoidance},
 for a Scott set $\mathcal{S}$ which is possibly of cardinality the continuum,
 if we pick $\mathcal{A} \subseteq 2^\omega - \mathcal{S}$ of cardinality less than the continuum
 and also a countable $\mathcal{B} \subseteq \mathcal{S}$,
 then we can find a non-standard $M \models \PA$
  s.t. $|\SSy(M)| = 2^\omega$,
   $\mathcal{A} \cap \SSy(M) = \emptyset$ and $\mathcal{B} \subset \SSy(M)$.
So Corollary \ref{cor:MA-SSy-cone-avoidance} can be regarded as a partial answer to the Scott Set Problem.

\end{document}